\newcommand{\N}{\mathbb{N}}
\newcommand{\set}[1]{\left\{#1\right\}}
\newcommand{\norm}[2]{\| #1 \|_{#2}}
\newcommand{\Rg}{\mathcal R}
\newtheorem{theorem}{Theorem}
\newtheorem{proposition}{Proposition}
\newtheorem{lemma}{Lemma}
\newtheorem{definition}{Definition}
\newtheorem{remark}{Remark}
\title[Degree of ill-posedness governed by the Ces\`{a}ro operator]{The degree of ill-posedness for some composition
governed by the Ces\`{a}ro operator}
\author{Yu Deng}
\address{Faculty of Mathematics, Chemnitz University of Technology, 09107 Chemnitz,  Germany}
\email{yu.deng@math.tu-chemnitz.de}
\author{Hans-J\"urgen Fischer}
\address{01159 Dresden, Germany}
\email{hans.fisher@online.de}
\author{Bernd Hofmann}
\address{Faculty of Mathematics, Chemnitz University of Technology, 09107 Chemnitz,  Germany}
\email{hofmannb@mathematik.tu-chemnitz.de}
\date{\today\\Corresponding author: B.~Hofmann}
\begin{document}
\begin{abstract}
  In this article, we consider the singular value asymptotics of compositions of compact
  linear operators mapping in the real Hilbert space of quadratically
  integrable functions over the unit interval. Specifically, the
  composition is given by the compact simple integration operator followed by
  the non-compact Ces\`{a}ro operator possessing a non-closed range.
  We show that the degree of ill-posedness of that composition is {\it two}, which means that the Ces\`{a}ro operator increases the degree of ill-posedness
  by the amount of $one$ compared to the simple integration operator.
  \end{abstract}

\maketitle

 {\bf Keywords:}
Ces\`{a}ro operator,  linear inverse problem, degree of
ill-posedness, composition operator, compact operator, singular value decomposition, twofold integration operator.

\medskip

{\bf MSC:}
47A52, 47B06, 65J20, 40G05

\medskip

\section{Introduction}
\label{sec:intro}

This is a new paper in the series of articles \cite{Freitag05,HW05,HW09} and recently \cite{HM22} that are dealing with the degree of ill-posedness of linear operator equations
\begin{equation} \label{eq:opeq}
A\,x\,=\,y\,,
\end{equation}
where the compact linear operator $A: X \to Y$ is
factorized as
 \begin{equation} \label{eq:factorized}
 \begin{CD}
  A:\; @.  X @> K >> Z  @> N>> Y\,,
  \end{CD}
\end{equation}
for infinite dimensional separable Hilbert spaces $X,Y$ and $Z$. In this context, $A=N \circ K$ denotes the composition of an injective {\it compact} linear operator $K: X \to Z$ and a bounded injective and {\it non-compact}, but not continuously invertible, linear operator $N:~Z \to Y$.
The imposed requirements on $K$ and $N$ imply that the ranges $\Rg(K)$, $\Rg(N)$ and $\Rg(A)$ are infinite dimensional, but non-closed, subspaces of the corresponding Hilbert spaces.
Following the concept of Nashed \cite{Nashed87}, the total equation \eqref{eq:opeq} and the inner linear operator equation
\begin{equation} \label{eq:inner}
K\, x\,=\,z
\end{equation}
are ill-posed of type~II due to the compactness of $A$ and $K$, whereas the outer linear operator equation
\begin{equation} \label{eq:outer}
N\, z\,=\,y
\end{equation}
is ill-posed of type~I, since $N$ is non-compact.

We recall here a definition of the interval and degree of ill-posedness along the lines of \cite{HofTau97}:

\begin{definition} \label{def:degree}
Let $\{\sigma_n(A)\}_{n=1}^\infty$ the non-increasing sequence  of singular values of the injective and compact linear operator $A: X \to Y$,
tending to zero as $n \to \infty$. Based on the well-defined {\sl interval of ill-posedness} introduced as
$$[\underline{\mu}(A),\overline{\mu}(A)]=\left[\liminf \limits _{n \to \infty}
\frac{-\log \sigma_n(A)} {\log n}\,,\,\limsup \limits _{n \to
\infty} \frac{-\log \sigma_n(A)} {\log n}\right]  \subset [0,\infty],$$
we say that the operator $A$, and respectively the associated operator equation \eqref{eq:opeq}, is {\sl ill-posed of degree} $\mu=\mu(A) \in (0,\infty)$
if $\mu=\underline{\mu}(A)=\overline{\mu}(A)$, i.e., if the interval of ill-posedness degenerates into a single point.
\end{definition}

The main objective of the above mentioned article series and of the present study is to learn whether the non-compact operator $N$ can amend the degree of ill-posedness of the compact operator $K$ by such a composition $A=N \circ K$.
Such amendment would be impossible if $N$ were continuously invertible, or in other words if the outer operator equation \eqref{eq:outer} were well-posed.
Since in our setting zero belongs to the spectrum of $N$, the singular value asymptotics of $A$ can differ from that of $K$, but due to the inequality
\begin{equation} \label{eq:grow}
\sigma_n(A) \le \|N\|_{\scriptscriptstyle {\mathcal L}(Z,Y)}\,\sigma_n(K) \qquad (n \in \N)
\end{equation}
only in the sense of a growing decay rate, which means a growing degree of ill-posedness of $A$ compared with $K$. Note that the estimate \eqref{eq:grow} is an immediate consequence
of the Courant–Fischer min-max principle for the characterization of singular values.

In this study, we will focus on one common Hilbert space $X=Y=Z=L^2(0,1)$, the space of quadratically integrable real functions over the unit interval of the real axis.
We consider as operator $K$ the compact {\sl simple integration operator} $J: L^2(0,1) \to L^2(0,1)$ defined as
\begin{equation}\label{eq:J}
[J x](s):=\int_0^s x(t)dt\qquad(0 \le s \le 1)\,,
\end{equation}
where the singular system
$$\{\sigma_n(J);u_n(J);v_n(J)\}_{n=1}^\infty \quad \mbox{with} \quad\ Ju_n(J)=\sigma_n(J)v_n(J)\quad (n \in \N)$$ is of the form
\begin{equation} \label{eq:SVDJ}
  \left\{\frac{2}{(2n-1)\pi},\sqrt{2}\cos\left(n-\frac{1}{2}\right)\pi t \;(0 \le t \le 1);\sqrt{2}\sin\left(n-\frac{1}{2}\right)\pi t \;(0 \le t \le 1)\right\}_{n=1}^\infty\,.
\end{equation}
The asymptotics \footnote{We use the notation $a_n \asymp b_n$ for
sequences of positive numbers $a_n$ and $b_n$ satisfying inequalities  $\underline{c}\,b_n \le a_n \le \overline{c}\,b_n$ with constants $0<\underline{c} \le \overline{c}<\infty$ for sufficiently large $n \in \N$.}
\begin{equation} \label{eq:simple}
\sigma_n(J) \asymp \frac{1}{n}
\end{equation}
shows that the degree of ill-posedness of $J$ in the sense of Definition~\ref{def:degree} is {\it one}.

Let us briefly mention the former results using $J$ as compact operator in such a composition.  In the papers \cite{HW05,HW09}, it was shown that wide classes of bounded non-compact multiplication operators $M: L^2(0,1) \to L^2(0,1)$ defined as
\begin{equation}\label{eq:multioperator}
[Mx](t):=m(t)\,x(t) \qquad (0 \le t \le 1)\,,
\end{equation}
with a multiplier functions $m \in L^\infty(0,1)$ possessing essential zeros, do not amend the singular value asymptotics. This means $\sigma_n(M \circ J) \asymp \sigma_n(J)$ and implies that
the ill-posedness degree of $A=M \circ J$ stays at one. A first example, where the degree of ill-posedness grows, was presented in \cite{HM22} for $A=H \circ J$ with the bounded non-compact Hausdorff operator $H:L^2(0,1) \to \ell^2(\N)$ defined as
\begin{equation} \label{eq:Haus}
[Hz]_j:= \int_0^1 t^{j-1}z(t)dt \qquad (j=1,2,...)\,,
\end{equation}
and we refer for properties of $H$ to the article \cite{Gerth21}.
In \cite[\S 5]{HM22} it could be shown that for some positive constants $\underline{c}$ and $\overline{c}$ the singular values behave as
\begin{equation} \label{eq:altrate}
\exp(-\underline{c}\,n) \le \sigma_n(H \circ J)  \le \frac{\overline{c}}{n^{3/2}}\,.
\end{equation}
Hence, the ill-posedness interval for the composition $A=H \circ J$ is a subset of the interval $\left[\frac{3}{2},\infty\right]$.

\medskip

Now in the present study, we only consider as bounded non-compact and not continuously invertible operator $N$ the {\sl continuous Ces\`{a}ro operator} $C: L^2(0,1) \to L^2(0,1)$ defined as
\begin{equation}\label{eq:C}
[C x](s):=\frac{1}{s}\int_0^s x(t)dt\qquad(0 < s \le 1)\,.
\end{equation}
We refer to \cite{Brown65} and \cite{Lacruz15,Leib73} for properties including boundedness and further discussions concerning this operator $C$. The two properties of $C$, which are most important for the present study, are outlined in the following Lemma~\ref{lem:two}. Its
proof is given in the appendix.

\begin{lemma} \label{lem:two}
The injective bounded linear operator $C: L^2(0,1) \to L^2(0,1)$ from \eqref{eq:C} is non-compact and not continuously invertible, i.e.,~the inverse operator $C^{-1}: \Rg(C) \subset L^2(0,1) \to L^2(0,1)$ is unbounded and hence the range $\Rg(C)$ is not a closed subset of $L^2(0,1)$.
\end{lemma}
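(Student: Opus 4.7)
The plan is to establish the three assertions of Lemma~\ref{lem:two}---injectivity, non-closed range (equivalently, unbounded algebraic inverse), and non-compactness---in short steps, each built around an explicit family of test functions. For injectivity, I would simply note that $Cx=0$ forces $\int_0^s x(t)\,dt=0$ for every $s\in(0,1]$, so that by the Lebesgue differentiation theorem $x=0$ almost everywhere on $(0,1)$.

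For the assertion that $C^{-1}:\Rg(C)\to L^2(0,1)$ is unbounded, I would exhibit the normalized monomials $x_n(t):=\sqrt{2n+1}\,t^n$, which satisfy $\norm{x_n}{L^2(0,1)}=1$. A direct integration gives $[Cx_n](s)=\frac{\sqrt{2n+1}}{n+1}\,s^n$, hence $\norm{Cx_n}{L^2(0,1)}=1/(n+1)\to 0$. This shows that $C$ is not bounded below, so its algebraic inverse on $\Rg(C)$ cannot be continuous; equivalently, by the bounded inverse theorem, $\Rg(C)$ is not closed in $L^2(0,1)$.

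For non-compactness, I would broaden the previous family to the eigenfunctions $\varphi_{\alpha}(t):=t^{\alpha}$, which belong to $L^2(0,1)$ precisely when $\alpha>-\tfrac12$. A one-line computation yields $C\varphi_\alpha=\tfrac{1}{\alpha+1}\varphi_\alpha$, and as $\alpha$ runs through $(-\tfrac12,\infty)$ the eigenvalue $\tfrac{1}{\alpha+1}$ sweeps out the entire open interval $(0,2)$, which is uncountable. Since the nonzero spectrum of a compact operator on a Hilbert space is at most countable with $0$ as its only possible accumulation point (Riesz--Schauder theory), this rules out compactness of $C$.

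I expect no serious difficulty in carrying out this program. The only mildly subtle inference is the passage from $\norm{Cx_n}{L^2(0,1)}\to 0$ (with unit-norm $x_n$) to the non-closedness of $\Rg(C)$, which rests on the standard equivalence between ``bounded below'' and ``injective with closed range''. For non-compactness, the uncountable-eigenvalue argument is preferable to the more direct route of producing an orthonormal sequence $\{e_k\}$ with $Ce_k\not\to 0$, because the eigenfunctions $t^\alpha$ are not mutually orthogonal (indeed $\scalar{\varphi_\alpha}{\varphi_\beta}=1/(\alpha+\beta+1)$), so explicit orthogonalization would obscure the clean spectral picture.
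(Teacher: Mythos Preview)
Your proposal is correct, and each step checks out: the monomials $x_n(t)=\sqrt{2n+1}\,t^n$ indeed satisfy $\|Cx_n\|_{L^2(0,1)}=1/(n+1)$, so $C$ is not bounded below; and the eigenfunction computation $C\varphi_\alpha=\tfrac{1}{\alpha+1}\varphi_\alpha$ for $\alpha>-\tfrac12$ genuinely produces the interval $(0,2)$ of point spectrum, which by Riesz--Schauder immediately rules out compactness. The injectivity remark via Lebesgue differentiation is also fine.

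Your route differs from the paper's in both substantive parts. For the unbounded inverse, the paper works with the oscillatory family $x_n(t)=\sqrt{n}\cos(nt)$ and shows that $\|Cx_n\|_{L^2}$ stays bounded while $\|x_n\|_{L^2}\to\infty$; your monomial sequence is simpler and has the advantage of being an explicit instance of the eigenfunction family, so the two assertions of the lemma are handled by a single mechanism. For non-compactness, the paper argues directly at the sequential level: it takes the weakly null sequence $x_n=\sqrt{n}\,\chi_{(0,1/n]}$ and computes $\|Cx_n\|^2_{L^2}\to 2\neq 0$, so $C$ fails to be completely continuous. Your spectral argument is more structural and shorter, but it trades the elementary sequential criterion for an appeal to Riesz--Schauder theory. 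Either approach is perfectly adequate here; yours has the virtue of unifying the two conclusions through the single observation that $t^\alpha$ are eigenfunctions with eigenvalues accumulating at both $0$ and $2$.
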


Precisely, in our composition the compact simple integration operator $J$ from \eqref{eq:J} is followed by the non-compact Ces\`{a}ro operator $C$ from \eqref{eq:C} as $A:=C \circ J: L^2(0,1) \to L^2(0,1)$. The compact composite operator $A$ can be written explicitly as
\begin{equation}\label{eq:A}
[Ax](s):= \frac{1}{s}\int_0^s (s-t)\, x(t) dt= \int_0^s \frac{s-t}{s}\, x(t) dt  \qquad(0 \le s \le 1)\,.
\end{equation}

The paper is organized as follows. In Section \ref{sec:twofold} the relationship of the composite operator $A$ with the twofold integration operator is presented. In this way, the lower bound of the degree of the ill-posedness of $A$ can be determined. We analyse some properties of the Hilbert-Schmidt operator $A$ in Section \ref{sec:kernel} and one can establish its approximate decay rate  numerically via calculating the eigenvalues of $A^*A$ with symmetric kernel.  Finally, with the aid of a suitable orthonormal basis in $L^2(0,1)$ we are able to identify the degree of ill-posedness of the composite operator $A=C \circ J$ in Section \ref{sec:improved}.

\section{Cross connections to the twofold integration operator}
\label{sec:twofold}

We recall the family of {\sl Riemann-Liouville fractional integral operators} $J^\kappa: L^2(0,1) \to L^2(0,1)$ defined as
\begin{equation} \label{eq:kappa}
[J^\kappa x](s):=\frac{1}{\Gamma(\kappa)}\int_0^s (s-t)^{\kappa-1} x(t) dt  \qquad(0 \le s \le 1)\,.
\end{equation}
For all $\kappa>0$, the linear operators $J^\kappa$ are injective and compact. We know (cf.~\cite{VuGo94} and references therein) that the singular value asymptotics
\begin{equation}\label{eq:asympkappa}
\sigma_n(J^\kappa) \asymp \frac{1}{n^\kappa}
\end{equation}
holds true. The solution of the equation $J^\kappa x=y$ can be seen as the $\kappa$'s fractional derivative of $y$ such that degree of ill-posedness of this equation is $\kappa$ and grows with the order of differentiation.
Besides the simple integration operator $J$ from \eqref{eq:simple} for $\kappa=1$, the {\sl twofold integration operator}
\begin{equation}\label{eq:twofold}
[J^2 x](s):= \int_0^s (s-t) x(t) dt  \qquad(0 \le s \le 1)
\end{equation}
for $\kappa=2$ (see further details in \cite[Section~11.5]{Ramlau20}) plays some prominent role in our study. Obviously, we can write for the composite operator $A$ from \eqref{eq:A} on the one hand
\begin{equation} \label{eq:div}
[Ax](s)=[J^2x](s)/s \qquad (0<s \le 1)\,,
\end{equation}
and on the other hand
\begin{equation} \label{eq:Mcomp}
J^2= M \circ A \quad \mbox{for multiplication operator}\quad [Mx](s)=s\, x(s).
\end{equation}
We mention here that formula \eqref{eq:div} shows that the range $\Rg(A)$ of \linebreak $A: L^2(0,1) \to L^2(0,1)$ is a subset of the space of continuous functions over $[0,1]$. Namely, $\Rg(J^2)$ is a subset of the Sobolev space $H^2(0,1)$, which is continuously embedded in $C^1[0,1]$ and
contains only Lipschitz continuous functions. Thus, $[Ax](s)$ can be  continuously extended to $s \in [0,1]$.
An application of formula \eqref{eq:grow} yields for the singular values
$$\sigma_n(J^2) \le \|M\|_{\scriptscriptstyle \mathcal{L}(L^2(0,1))}\,\sigma_n(A)\le \sigma_n(A) \le \|C\|_{\scriptscriptstyle \mathcal{L}(L^2(0,1))}\,\sigma_n(J) \qquad (n \in \N)\,.$$
Together with \eqref{eq:asympkappa} we obtain that there exist positive constants $K_1$ and $K_2$ such that
\begin{equation} \label{eq:chain1}
\frac{K_1}{n^2} \le \sigma_n(A) \le \frac{K_2}{n}  \qquad (n \in \N)\,.
\end{equation}
Consequently, we know at this point only that the interval of ill-posedness of the operator $A$ from \eqref{eq:A} is a subset of the interval $[1,2]$.
However, taking into account that $A$ is a Hilbert-Schmidt operator, we will be able to improve the order of the upper bound of \eqref{eq:chain1} in Section~\ref{sec:improved}.

\section{Hilbert-Schmidt property and kernel smoothness}
\label{sec:kernel}

As one sees from \eqref{eq:A}, $A=C \circ J: L^2(0,1) \to L^2(0,1)$ is linear Volterra integral operator with quadratically integrable kernel and hence a Hilbert-Schmidt operator with Hilbert-Schmidt norm square
\begin{equation} \label{eq:HS}
\|A\|_{HS}^2  =\int \limits_0^1 \int \limits_0^s \left(\frac{s-t}{s}\right)^2 dt ds = \frac{1}{6}\,.
\end{equation}
Taking into account that the adjoint operator $A^*: L^2(0,1) \to L^2(0,1)$ is of the form $$[A^*y](t)=\int \limits_t^1 \frac{s-t}{s}\, y(s) ds \quad (0 \le t \le 1)\,,$$
we derive the structure of the symmetric kernel $k(s,t)$ of the self-adjoint Fredholm integral operator
$$ [A^*A \,w](t)=\int_0^1 k(t,s)\,w(s) ds \quad (0 \le t, s \le 1)$$
as
\begin{equation} \label{eq:kernel}
\begin{aligned}
k(t,s)&= \int \limits _{\max(t,s)}^1 \left(\frac{\tau-t}{\tau} \right)\,\left(\frac{\tau-s}{\tau} \right)\, d\tau\\
 &= {\scriptstyle \left\{\begin{array}{ll}
1-st-s+t\ln s+s\ln s+t &\quad (0<  t\leq s \le 1)\\
1-st-t+t\ln t+s\ln t+s& \quad (0<s< t \le  1)
\end{array} \right.} \,.
\end{aligned}
\end{equation}
It is well-known that decay rates of the singular values of a compact linear operator grows in general with the smoothness of the kernel. Unfortunately,
the kernel \eqref{eq:kernel} is continuous on the unit square only with the exception of the origin $(0,0)$, where a pole arises. Therefore, usually applied assertions on kernel smoothness (cf.,~e.g., \cite{Chang52,Reade83,Read83}) cannot be exploited to estimate the asymptotics of the
eigenvalues $\lambda_n(A^*A)$ and in the same manner the asymptotics of the singular values $\sigma_n(A)=\sqrt{\lambda_n(A^*A)}$ of $A$. On the other hand, by twice differentiation of the function $[A^*Aw](t)\,(0 \le t \le 1)$ as
\begin{equation*}
\begin{aligned}
[A^*Aw](t)&=\int_t^1\frac{s-t}{s}\int_0^s\frac{s-\tau}{s}w(\tau)d\tau ds\\
[A^*Aw]'(t)&=\int_t^1(-\frac{1}{s})\int_0^s\frac{s-\tau}{s}w(\tau)d\tau ds\\
&=-\int_t^1\int_0^s\frac{s-\tau}{s^2}w(\tau)d\tau ds\\
[A^*Aw]''(t)&=\int_0^t\frac{t-\tau}{t^2}w(\tau)d\tau,\\
\end{aligned}
\end{equation*}
we have an integro-differential final value problem
\begin{equation}\label{eq:BVP}
\int \limits_0^t \frac{t-\tau}{t^2}\,w(\tau) d\tau=\lambda\,w^{\prime\prime}(t)\;\;(0<t<1), \quad w(1)=w^\prime(1)=0 \,.
\end{equation}
Achieving an explicit analytical solution of the eigenvalues $\lambda_n(A^*A)$ and corresponding
eigenfunctions $w \in L^2(0,1)$ seems to be very difficult.

However, we are still able to calculate numerical approximations of the eigenvalues $\lambda_n(A^*A)$ from \eqref{eq:BVP} for small $n$. We have made use of the technique of finite difference discretization with a specific rectangular rule. In this context, the unit interval $[0,1]$ had been divided into $\ell$ partitions with the uniform length $h=1/\ell$. The function values $w(\tau)$ are represented by discrete values $w_i:=w(i*h)\:(i=0,1,\dots,\ell)$. The discrete counterpart of its second derivative $w^{\prime\prime}(t)$ is considered as
$\frac{w_{j+1}-2w_j+w_{j-1}}{h^2}\;(j=1,2,...,\ell-1)$.
The equation \eqref{eq:BVP} can be written in a discrete form as
$$h\sum_{i=0}^{j-1}\frac{(j-i)h}{(jh)^2}w_i=\lambda\frac{w_{j+1}-2w_j+w_{j-1}}{h^2}\qquad j=1,\dots, \ell-1.$$
 with the boundary conditions of as $w_\ell=0$ and $w_{\ell}=w_{\ell-1}$.

Now we need to find those positive values $\lambda$  such that the determinant of the $(\ell+1)\times
(\ell+1)$ square matrix \begin{small}
 \[\left(\begin{array}{lllllllll}
 h^2-\lambda& 2\lambda&-\lambda&0&0&0&0&0&0\\
 \tfrac24 h^2&\tfrac14 h^2-\lambda& 2\lambda&-\lambda&0&0&0&0&0\\
 \tfrac39 h^2&\tfrac29 h^2&\tfrac19 h^2-\lambda& 2\lambda&-\lambda&0&0&0&0\\
 \vdots & \ddots& \ddots& \cdots& \cdots&\cdots&\ddots&\ddots&\vdots\\
 \tfrac{i}{i^2}h^2&\tfrac{i-1}{i^2}h^2&\cdots&\tfrac{1}{i^2}h^2-\lambda&2\lambda&-\lambda&0&0&0\\
 \vdots & \ddots& \ddots& \cdots& \cdots&\cdots&\ddots&\ddots&\vdots\\
 \tfrac{\ell-1}{(\ell-1)^2}h^2&\tfrac{\ell-2}{(\ell-1)^2}h^2&\cdots&\tfrac{2}{(\ell-1)^2}h^2&\tfrac{1}{(\ell-1)^2}h^2-\lambda&2\lambda&-\lambda&0&0\\
 0&0&\cdots&\cdots&0&0&0&0&1\\
 0&0&\cdots&\cdots&0&0&0&1&-1\\
  \end{array}\right)\]
  \end{small}

{\parindent0em  equals} zero, which gives the sequence of discretized eigenvalues $\lambda_n \;(n=1,2,...,\ell+1)$  in non-increasing order. The following Figure \ref{fig:EW} displays in a double logarithmic representation the decay of calculated eigenvalues for the case $\ell=20$.
From that curve an asymptotics of the form $\lambda_n(A^*A)\asymp n^{-4}$ can be predicted, which would coincide with the lower bound of the inequality chain \eqref{eq:chain1}.
\begin{figure}[H]
\begin{center}
\includegraphics[width=8cm]{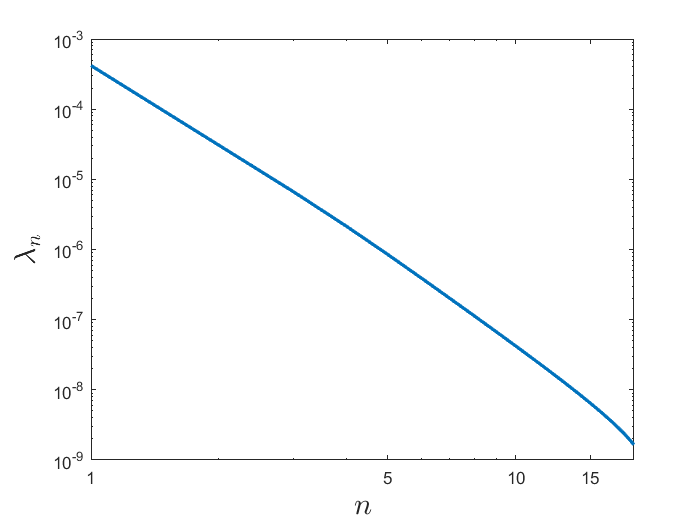}
\caption{Double logarithmic representation of the computationally approximated eigenvalues $\lambda_n(A^*A)$ for small indices $n$.}
\label{fig:EW}
\end{center}
\end{figure}

{\parindent0em This} computational prediction will be confirmed by the analytical study of the subsequent section.

\section{Improved upper bounds}
\label{sec:improved}

To reduce the asymptotics gap, which has been opened by the inequality chain \eqref{eq:chain1}, we reuse the Hilbert-Schmidt operator technique introduced in Section~5 of the recent article \cite{HM22}.
First, we briefly summarize the main ideas of this approach. For the Hilbert-Schmidt operator $A: L^2(0,1) \to L^2(0,1)$ with singular system $\{\sigma_n(A),u_n=u_n(A),v_n=v_n(A)\}_{n=1}^\infty$ we have for the Hilbert-Schmidt norm square
$$ \|A\|_{HS}^2 = \sum \limits_{n=1}^\infty \sigma_n^2(A)\,. $$
We start with the following proposition, which can be found with a complete proof as Proposition~3 in \cite{HM22} when taking into account Theorem~15.5.5 from \cite{Pie78}.

\begin{proposition} \label{pro:HM1}
Let $\{e_i\}_{i=1}^\infty$ denote an arbitrary orthonormal basis in $L^2(0,1)$ and $Q_n$ denote the orthogonal projections onto the $n$-dimensional subspace $\operatorname{span}\set{e_{1},\dots,e_{n}}$
of $L^2(0,1)$. Moreover, let $S_n$ denote the orthogonal projection onto the specific $n$-dimensional subspace $\operatorname{span}\set{u_{1},\dots,u_{n}}$ of the first $n$ singular functions of $A$.
Then we have that
\begin{equation} \label{eq:HM1}
 \sum_{i=n+1}^{\infty} \sigma^{2}_{i}(A) = \norm{A(I - S_{n})}{HS}^{2} \leq \norm{A(I - Q_{n})}{HS}^{2}= \sum_{i=n+1}^\infty \|Ae_i\|^2_{L^2(0,1)}\,.
\end{equation}
 \end{proposition}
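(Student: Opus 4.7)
The assertion decomposes naturally into two Hilbert--Schmidt identities and one inequality between them; I would treat these three pieces separately.

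For the equalities, the preliminary observation is that $A = C \circ J$ is injective, being a composition of the injective operators $J$ from \eqref{eq:J} and $C$ from Lemma~\ref{lem:two}. Hence $\ker(A) = \{0\}$, which means that the right singular functions $\{u_i\}_{i=1}^{\infty}$ of the compact operator $A$ form a complete orthonormal basis of $L^2(0,1)$. Using $\|T\|_{HS}^2 = \sum_j \|T f_j\|^2$ (valid for any ONB $\{f_j\}$) with $T = A(I - S_n)$ expanded on $\{u_i\}$, the summand vanishes for $i \le n$ (since $(I - S_n)u_i = 0$) and equals $\|A u_i\|^2 = \sigma_i^2(A)$ for $i > n$, producing the first equality. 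The same formula applied with $T = A(I - Q_n)$ expanded on the given basis $\{e_j\}$ vanishes for $j \le n$ and returns $\|A e_j\|^2$ for $j > n$, producing the third equality.

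For the middle inequality, I would invoke the Schmidt / Eckart--Young--Mirsky optimality theorem: among all operators $R : L^2(0,1) \to L^2(0,1)$ of rank at most $n$, the truncated singular-value expansion $A S_n = \sum_{i=1}^n \sigma_i(A)\,\langle \cdot, u_i\rangle\,v_i$ minimizes the distance $\|A - R\|_{HS}$. Since $A Q_n$ has rank at most $n$, this immediately yields $\|A - A S_n\|_{HS} \le \|A - A Q_n\|_{HS}$, and squaring reproduces the desired inequality.

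The main obstacle is precisely this best-rank-$n$-approximation property. A self-contained derivation applies Ky Fan's maximum principle to the positive self-adjoint compact operator $A^*A$, whose eigenvalues are $\sigma_i^2(A)$ and whose eigenbasis is $\{u_i\}$: for any orthonormal $n$-tuple $\{f_j\}_{j=1}^n$, expanding in $\{u_i\}$ produces weights $c_i := \sum_{j=1}^n |\langle f_j, u_i\rangle|^2$ with $c_i \in [0,1]$ and $\sum_i c_i = n$, so
\begin{equation*}
\sum_{j=1}^n \|A f_j\|^2 \;=\; \sum_{i=1}^\infty c_i\,\sigma_i^2(A) \;\le\; \sum_{i=1}^n \sigma_i^2(A),
\end{equation*}
with equality when $f_j = u_j$. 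Subtracting this head inequality from the (finite) total $\|A\|_{HS}^2 = \sum_i \sigma_i^2(A)$, cf.~\eqref{eq:HS}, is exactly the tail inequality in the assertion. This is the content of Theorem~15.5.5 in \cite{Pie78} as used in \cite{HM22}.
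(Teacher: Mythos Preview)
Your proposal is correct and matches the paper's approach: the paper does not give its own proof of this proposition but simply cites Proposition~3 in \cite{HM22} together with Theorem~15.5.5 of \cite{Pie78}, which are precisely the ingredients you invoke and whose content (the Hilbert--Schmidt trace identity on any orthonormal basis and the Ky~Fan/Eckart--Young--Mirsky best-rank-$n$ approximation) you unpack accurately. Nothing further is needed.
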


The following  technical lemma and its proof can be found in \cite[Lemma~4]{HM22}.

 \begin{lemma}\label{lem:HM1}
 Let~$\{s_{i}\}_{i=1}^\infty$ be a non-increasing sequence of positive numbers, and let $\omega$ be a positive number. Suppose that there is a
  constant $K<\infty$ such that $\sum \limits_{i=n+1}^{\infty}
  s_{i}^{2}\leq K n^{-2\omega}$ for $n=1,2,\dots$. Then there also exists a
  constant $\hat K<\infty$ such that~$s_{i}^{2} \leq \hat K i^{-(1+2\omega)}$ for $i=1,2,\dots.$
 \end{lemma}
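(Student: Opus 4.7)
The plan is to exploit monotonicity of $\{s_i\}$ via a standard doubling trick. The key observation is that for a non-increasing positive sequence, the block $\sum_{i=n+1}^{2n} s_i^2$ consists of $n$ terms, each of which is at least $s_{2n}^2$. Combining this with the tail-sum hypothesis gives
$$n \cdot s_{2n}^2 \;\leq\; \sum_{i=n+1}^{2n} s_i^2 \;\leq\; \sum_{i=n+1}^\infty s_i^2 \;\leq\; K\, n^{-2\omega},$$
so the even-indexed bound $s_{2n}^2 \leq K\, n^{-(1+2\omega)}$ follows at once for every $n \geq 1$. This is the essential content of the lemma, already at the correct decay rate, but only along the subsequence of even indices.

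To promote this to every index, I would invoke monotonicity a second time. For $i \geq 2$, set $n := \lfloor i/2 \rfloor \geq 1$; since $2n \leq i$ and $\{s_k\}$ is non-increasing we have $s_i \leq s_{2n}$, so
$$s_i^2 \;\leq\; s_{2n}^2 \;\leq\; K\, \lfloor i/2 \rfloor^{-(1+2\omega)} \;\leq\; 3^{1+2\omega}\, K\, i^{-(1+2\omega)},$$
where I use the elementary bound $\lfloor i/2 \rfloor \geq i/3$ valid for all $i \geq 2$. The single remaining case $i=1$ is absorbed by enlarging the constant, so choosing $\hat K := \max\{s_1^2,\, 3^{1+2\omega} K\}$ finishes the argument.

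There is no substantive obstacle; the only things to track are the floor/ceiling constants and the fact that the tail-sum hypothesis by itself controls $s_i^2$ only on the doubling subsequence $\{s_{2n}\}$, which is exactly why a second monotonicity step is needed to fill in the odd indices. This is the standard trick that converts an averaged (summed) decay estimate into a pointwise decay estimate for monotone sequences, and it loses a factor of $i$ in the exponent precisely because the block $[n+1,2n]$ contains $n \asymp i$ terms.
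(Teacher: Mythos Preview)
Your argument is correct: the doubling trick---bounding $n\,s_{2n}^2$ by the tail sum and then using monotonicity once more to fill in all indices---is exactly the standard way to convert a tail-sum decay hypothesis into a pointwise decay bound for monotone sequences. The paper does not actually give its own proof of this lemma but simply cites \cite[Lemma~4]{HM22}; your argument is almost certainly what appears there, as there is essentially no alternative route for a statement of this type.
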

Furthermore, there is another useful property of orthogonal polynomials, shown in the proof of Lemma~7.4 in~\cite[Chapter 2, p.69--70]{Nik91}.
\begin{lemma}\label{lm:ortho}
Let $\{p_j\}_{j=0,1,\dots}$ be a system of orthogonal polynomials with respect to some measure $\nu$ and satisfy the orthogonality relation
\begin{equation}\label{eq:orthog}
\int p_j(t)p_k(t)d\nu(t)=h_j\delta_{jk},
\end{equation}
where the value $h_j$ depends on the index $j$ and $\delta_{jk}$ denotes the Kronecker delta, equal to $1$ if $j=k$ and to $0$ otherwise. Then, the polynomials $q_j$ defined by
\begin{equation}\label{eq:q}
q_j(t)=\int\frac{p_j(t)-p_j(\tau)}{t-\tau}\,d\nu(\tau)
\end{equation}
possess the degree of $j-1$. Furthermore, we have the identity
$$\frac{p_i(t)-p_i(\tau)}{t-\tau}=\sum_{j=0}^{i-1}\frac{p_j(\tau)q_i(\tau)-p_i(\tau)q_j(\tau)}{h_j}\,p_j(t).$$
\end{lemma}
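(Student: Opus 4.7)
The plan is to prove the two claims separately. For the degree of $q_j$, I would note that $p_j(t)-p_j(\tau)$ vanishes at $t=\tau$ and has total degree $j$, so $\frac{p_j(t)-p_j(\tau)}{t-\tau}$ is a polynomial in $(t,\tau)$ of degree $j-1$ in each variable. Integrating this coefficient-wise against $d\nu(\tau)$ therefore produces a polynomial in $t$ of degree at most $j-1$. To pin down the exact degree, I would write $p_j(t)=a_jt^j+\cdots$ with $a_j\neq 0$ and use the identity $\frac{t^j-\tau^j}{t-\tau}=\sum_{k=0}^{j-1}t^{j-1-k}\tau^k$, which shows that the coefficient of $t^{j-1}$ in $\frac{p_j(t)-p_j(\tau)}{t-\tau}$ equals $a_j$ and is independent of $\tau$. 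Integration then produces the leading coefficient $a_j\int d\nu\neq 0$, and thus $\deg q_j=j-1$.

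For the Christoffel-type identity, I would fix $\tau$ and expand the polynomial $t\mapsto K_i(t,\tau):=\frac{p_i(t)-p_i(\tau)}{t-\tau}$ (of degree $i-1$ in $t$) in the orthogonal basis $\{p_j(t)\}_{j=0}^{i-1}$,
\[
K_i(t,\tau)=\sum_{j=0}^{i-1}c_j(\tau)\,p_j(t),\qquad h_j\,c_j(\tau)=\int K_i(t,\tau)\,p_j(t)\,d\nu(t).
\]
The key algebraic step is the symmetric splitting
\[
[p_i(t)-p_i(\tau)]\,p_j(t)=[p_j(t)-p_j(\tau)]\,p_i(t)+\bigl\{p_j(\tau)[p_i(t)-p_i(\tau)]-p_i(\tau)[p_j(t)-p_j(\tau)]\bigr\},
\]
which is verified by adding and subtracting $p_i(\tau)p_j(\tau)$. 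Dividing by $t-\tau$ and integrating against $d\nu(t)$, the first summand on the right becomes $\int\frac{p_j(t)-p_j(\tau)}{t-\tau}\,p_i(t)\,d\nu(t)$; its integrand is $p_i(t)$ times a polynomial in $t$ of degree $j-1<i$, so it vanishes by orthogonality of $p_i$ against all polynomials of degree less than $i$. The remaining two terms give $p_j(\tau)q_i(\tau)-p_i(\tau)q_j(\tau)$ once one observes from \eqref{eq:q} that $\int\frac{p_i(t)-p_i(\tau)}{t-\tau}\,d\nu(t)=q_i(\tau)$ (the integrand is invariant under swapping the roles of $t$ and $\tau$). Substituting this value of $h_jc_j(\tau)$ into the expansion yields the announced identity.

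In my view, the main obstacle is \emph{finding} the symmetric splitting displayed above: without that trick one is left with the coefficient integral $\int K_i(t,\tau)p_j(t)\,d\nu(t)$, which neither exhibits the $q_i$- nor the $q_j$-structure directly, and the desired $p_i(\tau)q_j(\tau)$ contribution (with the ``wrong'' index at $\tau$) is not obviously present. Once the bilinear rearrangement is written down, the orthogonality cancellation and the identification via the defining formula \eqref{eq:q} are routine.
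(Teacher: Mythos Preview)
Your argument is correct and complete. Note, however, that the paper does not supply its own proof of this lemma: it simply records the statement and cites Nikishin and Sorokin \cite[Chapter~2, Lemma~7.4]{Nik91} for the proof. So there is no in-paper argument to compare against; your write-up in fact supplies what the paper omits.

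Your approach is the standard one for this identity (these $q_j$ are the polynomials of the second kind, and the displayed formula is the associated Christoffel--Darboux-type relation). The degree computation is handled cleanly via the leading coefficient $a_j\int d\nu$, and the nonvanishing of $\int d\nu$ is implicit in $h_0>0$. The symmetric splitting you isolate is exactly the device used in the classical proof, and the subsequent steps---orthogonality to kill the $K_j(t,\tau)p_i(t)$ term and the symmetry $\int\frac{p_i(t)-p_i(\tau)}{t-\tau}\,d\nu(t)=q_i(\tau)$---are carried out correctly. There is nothing to add.
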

Based on the above Proposition~\ref{pro:HM1} as well as on Lemmas~\ref{lem:HM1} and \ref{lm:ortho} we can now improve the inequality chain \eqref{eq:chain1} for updating the asymptotics of the singular values $\sigma_n(A)$
of the composite operator $A=C \circ J$ under consideration.

\medskip

\begin{theorem} \label{thm:improvedrate2}
For the composition $A=C \circ J: L^2(0,1) \to L^2(0,1)$ of the simple integration operator $J: L^2(0,1) \to L^2(0,1)$ from \eqref{eq:J} followed by the continuous Ces\`{a}ro operator $C: L^2(0,1) \to L^2(0,1)$ from \eqref{eq:C}, the asymptotics
\begin{equation} \label{eq:Arate}
\sigma_n(A) \asymp \frac{1}{n^2}
\end{equation}
holds true. In other words, the operator $A$ possesses exactly the degree {\it two} of ill-posedness in the sense of Definition~\ref{def:degree}.
\end{theorem}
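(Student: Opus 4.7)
The lower bound $\sigma_n(A) \ge K_1/n^2$ is already furnished by the chain \eqref{eq:chain1}, so only the matching upper bound $\sigma_n(A) \le K_2/n^2$ remains. Following the Hilbert--Schmidt strategy of Section~5 in \cite{HM22}, I would apply Proposition~\ref{pro:HM1} to the orthonormal shifted Legendre basis $\{\tilde P_i^*\}_{i=0}^{\infty}$ of $L^2(0,1)$, reducing the task to proving
\[
 \sum_{i=n+1}^{\infty} \bigl\|A\,\tilde P_i^{*}\bigr\|_{L^{2}(0,1)}^{2} \;=\; \bigo\bigl(n^{-3}\bigr).
\]
Combined with Lemma~\ref{lem:HM1} applied with $\omega = 3/2$, this would give $\sigma_i^{2}(A) \le \hat K\,i^{-4}$, hence $\sigma_n(A) \le C\,n^{-2}$.

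To handle $\|A P_i^{*}\|^{2}$ I would exploit the classical Legendre antiderivative identity $\int_0^s P_i^{*}(t)\,dt = (P_{i+1}^{*}(s)-P_{i-1}^{*}(s))/(2(2i+1))$. Combined with the factorization $A = C\circ J$ and applied twice, this yields the closed-form representation
\[
 A P_i^{*}(s) \;=\; \frac{C P_{i+1}^{*}(s)\,-\,C P_{i-1}^{*}(s)}{2(2i+1)} \qquad (i\ge 1),
\]
in which each $C P_k^{*}$ is a genuine polynomial of degree $k$ on $[0,1]$, the zero of the antiderivative at $s=0$ absorbing the formal $1/s$ factor. Closed-form evaluations
\[
 \|C P_k^{*}\|_{L^{2}(0,1)}^{2} \,=\, \frac{2}{(k+1)(2k+1)}, \qquad \bigl\langle C P_{i+1}^{*},\,C P_{i-1}^{*}\bigr\rangle \,=\, \frac{1}{(i+1)(i+2)},
\]
can then be derived from the integral representation $C P_k^{*}(s) = \int_0^1 P_k^{*}(s\tau)\,d\tau$ together with the Legendre orthogonality on $[0,1]$; Lemma~\ref{lm:ortho} is the natural tool for organizing the divided-difference cross terms that arise after the rescaling $t = s\tau$. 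An asymptotic expansion of the three summands in $\|CP_{i+1}^{*}\|^{2} - 2\langle CP_{i+1}^{*},CP_{i-1}^{*}\rangle + \|CP_{i-1}^{*}\|^{2}$ then reveals an exact cancellation of the leading $\bigo(i^{-2})$ contributions, leaving $\|CP_{i+1}^{*}-CP_{i-1}^{*}\|^{2} = \bigo(i^{-3})$. Consequently $\|A\tilde P_i^{*}\|^{2} = (2i+1)\|AP_i^{*}\|^{2} = \bigo(i^{-4})$, whose tail from index $n+1$ is the desired $\bigo(n^{-3})$.

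\emph{Main obstacle.} The argument stands or falls on the precise three-term cancellation just described. A crude triangle-inequality estimate of $\|CP_{i+1}^{*}-CP_{i-1}^{*}\|$ delivers only $\|A\tilde P_i^{*}\|^{2} = \bigo(i^{-3})$, which via Lemma~\ref{lem:HM1} would translate merely into the suboptimal rate $\sigma_n(A) = \bigo(n^{-3/2})$ and fail to close the gap in \eqref{eq:chain1}. Reaching the sharp $n^{-2}$ thus requires both the exact evaluation of the Ces\`{a}ro norms and cross inner products and the identification of the leading-order cancellation --- and it is at precisely this step that the orthogonal-polynomial structure encoded by Lemma~\ref{lm:ortho} becomes indispensable.
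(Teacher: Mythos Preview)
Your proposal is correct and follows essentially the same route as the paper: shifted Legendre basis, Proposition~\ref{pro:HM1}, the antiderivative identity $JP_i^{*}=(P_{i+1}^{*}-P_{i-1}^{*})/(2(2i+1))$, Lemma~\ref{lm:ortho} to control the Ces\`aro images, and Lemma~\ref{lem:HM1} with $\omega=3/2$. The only organizational difference is that the paper expands $AP_{i+1}$ directly in the Legendre basis and obtains the exact closed form $\|AP_i\|_{L^2}^2=\dfrac{3}{2i(i-1)(2i-3)(2i+1)}$ (with telescoping tail $\sum_{i>n}\|AP_i\|^2=\dfrac{1}{8n^3-2n}$), whereas you track the three-term cancellation in $\|CP_{i+1}^{*}-CP_{i-1}^{*}\|^2$ asymptotically; your stated values $\|CP_k^{*}\|^2=\dfrac{2}{(k+1)(2k+1)}$ and $\langle CP_{i+1}^{*},CP_{i-1}^{*}\rangle=\dfrac{1}{(i+1)(i+2)}$ are exactly what the paper's Legendre expansion of $c_i$ yields, and they combine to the same $\bigo(i^{-4})$.
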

\begin{proof}
We are going to apply Proposition~\ref{pro:HM1} for $A=C \circ J$ with the specific orthonormal basis $\{P_i\}_{i=1}^\infty$ in $L^2(0,1)$ of normalized shifted Legendre Polynomials, 
which are defined by means of the standard orthogonal Legendre Polynomials  $\{L_i(t)\}_{i=0}^\infty$ on $[-1,1]$ with the relationship $$P_{i+1}(t)=\sqrt{2i+1}\,L_i(2t-1)\,.$$
In this proof, we set $\tilde{t}:=2t-1$, $\tilde{s}:=2s-1$ for $t,s\in [0,1]$ and we will use the well-known three-term recurrence relation
\begin{equation}\label{eq:rekurs}
(i+1)\,L_{i+1}(\tilde{t})=(2i+1)\,\tilde{t}\,L_i(\tilde{t})-i\,L_{i-1}(\tilde{t}),
\end{equation}
 as well as the initial conditions
$$L_0(\tilde{t})=1,\qquad L_1(\tilde{t})=\tilde{t}.$$
 Firstly, we obtain
 $$[JP_{i+1}](s)=\int_0^s P_{i+1}(t)dt=\frac{L_{i+1}(\tilde{s})-L_{i-1}(\tilde{s})}{2\sqrt{2i+1}}.  $$
Then we define
$$f_i({t}):=\frac{L_i(\tilde{t})-L_i(-1)}{2t}=\frac{L_i(\tilde{t})-L_i(-1)}{\tilde{t}+1}$$ and verify
$$[CP_{i+1}](s)= \frac{1}{s}\int_0^s P_{i+1}(t)dt=\frac{f_{i+1}({s})-f_{i-1}({s})}{\sqrt{2i+1}}.$$

Applying Lemma \ref{lm:ortho} and identifying the orthogonal polynomials $\{p_j\}_{j=0,1,\dots}$ as $\{L_j\}_{j=0,1,\dots}$ on $[-1,1]$ such that  $h_j=\frac{2}{2j+1}$ and $L_j(-1)=(-1)^j$, one can check that $q_j(x)$ from \eqref{eq:q} satisfies the recursion relation \eqref{eq:rekurs}.
Moreover, the equation
$$q_j(-1)=2(-1)^{j-1}H_j$$
holds true, where $H_j:=\sum_{k=1}^j\frac{1}{k}$.
Consequently, we have
$$f_i({t})=\sum_{j=0}^{i-1}(-1)^{i+j-1}(2j+1)(H_i-H_j)L_j(\tilde{t})$$
and
$$c_{i}(s)=[CP_{i+1}](s)=\frac{(-1)^i\sqrt{2i+1}}{i(i+1)}\sum_{j=0}^{i-1}(-1)^j(2j+1)L_j(\tilde{s})+\frac{\sqrt{2i+1}}{i+1}L_i(\tilde{s}).$$

Finally, we calculate
$$a_i(s)=[AP_{i+1}](s)=[CJP_{i+1}](s)=\frac{\frac{c_{i+1}(s)}{\sqrt{2i+3}}-\frac{c_{i-1}({s})}{\sqrt{2i-1}}}{2\sqrt{2i+1}}$$
and obtain the complex expression
\begin{small}
\begin{equation*}
\begin{aligned}
a_i(s)&=\underbrace{(-1)^{i}\frac{\sqrt{2i+1}}{(i-1)i(i+1)(i+2)}}_{:=k_1}\sum_{j=0}^{i-2}(-1)^j(2j+1)L_j(\tilde{s})\\
&+\underbrace{\frac{i^2-4i-2}{2i(i+1)(i+2)\sqrt{2i+1}}}_{:=k_2}L_{i-1}(\tilde{s})\\
&-\underbrace{\frac{\sqrt{2i+1}}{2(i+1)(i+2)}}_{:=k_3}L_i(\tilde{s})+\underbrace{\frac{1}{2(i+2)\sqrt{2i+1}}}_{:=k_4}L_{i+1}(\tilde{s})\,.
\end{aligned}
\end{equation*}
\end{small}

{\parindent0em Since} all terms above are orthogonal, the squared $L^2$-norm for $a_i$ attains the form
\begin{small}
\begin{equation*}
\begin{aligned}
\|AP_{i+1}\|_{L^2(0,1)}^2&= \|a_i\|_{L^2(0,1)}^2=\int_0^1a_i^2(s)ds=\frac{1}{2}\int_{-1}^1 a_i^2(\tilde{s})d\tilde{s}\\
&=k_1^2\sum_{j=0}^{i-2}(2j+1)+k_2^2\frac{1}{2i-1}+k_3^2\frac{1}{2i+1}+k_4^2\frac{1}{2i+3}\\
&=\frac{2i+1}{i^2(i+1)^2(i+2)^2}+\frac{(i^2-4i-2)^2}{4i^2(i+1)^2(i+2)^2(2i+1)(2i-1)}\\
&+\frac{1}{4(i+1)^2(i+2)^2}+\frac{1}{4(i+2)^2(2i+1)(2i+3)}\\
&=\frac{3}{2i(i+1)(2i-1)(2i+3)}\,.\\
\end{aligned}
\end{equation*}
\end{small}
and
$$\|AP_i\|_{L^2(0,1)}^2=\frac{3}{2i(i-1)(2i-3)(2i+1)}.$$
Now we can apply Proposition \ref{pro:HM1} immediately and derive that
$$\sum_{i=n+1}^\infty\sigma_i^2(A)\leq \sum_{i=n+1}^\infty\|AP_i\|_{L^2(0,1)}^2=\frac{1}{8n^3-2n}\leq Kn^{-3}$$
for a constant $K<\infty$.
According to Lemma \ref{lem:HM1} by identifying $s_i$ as $\sigma_i(A)$, there exists a positive constant $\hat{K}$ such that $\sigma_i^2(A)\leq \hat{K}^2\,i^{-4} $ and consequently
$$\sigma_i(A)\leq \hat{K}i^{-2}.$$
Taking into account the estimates of \eqref{eq:chain1} with focus on the lower bound, this  shows the asymptotics
$$\sigma_n(A)\asymp \frac{1}{n^2}$$
and completes the proof of the theorem.
\end{proof}

It is interesting to notice that the set $\{P_i\}_{i=1}^\infty$ of the (shifted) Legendre polynomials as orthonormal basis seems to be sufficiently close to the eigensystem $\{u_i(A)\}_{i=1}^\infty$ as part of the singular system of the Hilbert-Schmidt operator $A$. As the following remark indicates, the corresponding eigensystem $\{u_i(J)\}_{i=1}^\infty$ from \eqref{eq:SVDJ} of the integration operator $J$ does not reach the best result to determine the upper bound of the rate $\sigma_n(A)$.

\begin{remark} \label{rem:improvedrate}
{\rm Applying Proposition~\ref{pro:HM1} for $A=C \circ J$ with the specific orthonormal basis $\{e_i\}_{i=1}^\infty$ in $L^2(0,1)$ of the form
$$e_i(t):=u_i(J)=\sqrt{2}\cos((i-\frac{1}{2})\pi t)\quad (0 \le t \le 1)$$
(see the singular system \eqref{eq:SVDJ}), it is only possible to obtain the degree of ill-posedness for operator $A$ as
\begin{equation} \label{eq:chain2}
\frac{\bar K_1}{n^2} \le \sigma_n(A) \le \frac{\bar K_2}{n^{3/2}}  \qquad (n \in \N)\,,
\end{equation}
here, where $\bar K_1$ and $\bar K_2$  are some positive constants. \hfill \fbox{}
}\end{remark}

\begin{remark} \label{rem:multi}
{\rm In \cite{HW05} it was shown that multiplication operators $M:L^2(0,1)\to L^2(0,1)$ from \eqref{eq:multioperator} with multiplier functions $m(t)=t^\eta\;(\eta>0)$ do not change the ill-posedness degree of $J$ when they occur in a composition $M \circ J$. From the present study, we can see  that this effect is also observable for the composition $J^2=M \circ (C \circ J)$ (cf.~\eqref{eq:Mcomp}). This means that such multiplication operator $M$ in the special case $\eta=1$ also does not amend the ill-posedness degree when moving from to $A=C\circ J$ to $J^2=M \circ A$. In coincidence, the following Figure \ref{fig:SVD}
\begin{figure}[H]
\begin{center}
\includegraphics[width=10cm]{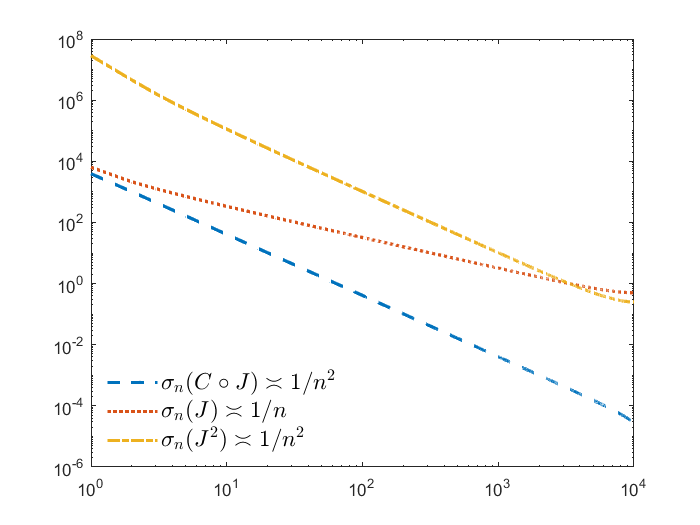}
\caption{Singular values of operators $A, J,$ and $J^2$}
\label{fig:SVD}
\end{center}
\end{figure}

{\parindent0em illustrates} the logarithmic plot of singular values of $\ell\times \ell$ discretization matrices with $\ell=10^4$ of the operators $A=C\circ J$, $J$ and $J^2$, which are calculated based on the MATLAB routine \textsc{svd}.\hfill \fbox{}
}\end{remark}

\section*{Acknowledgment}
YD and BH are supported by the German Science Foundation (DFG) under grant~HO~1454/13-1 (Project No.~453804957).

\section*{Appendix}

{\parindent0em {\bf Proof of Lemma~\ref{lem:two}:}}
To prove the non-compactness of $C$ it is enough to find a sequence $\{x_n\}_{n=1}^\infty$ in $L^2(0,1)$ such that $x_n \rightharpoonup 0$ (weak convergence in $L^2(0,1)$), but $\|Cx_n\|_{L^2(0,1)} \not \to 0$ as $n \to \infty$. In this context, we use
the bounded sequence $x_n(t)=\sqrt{n}\,\chi_{(0,\frac{1}{n}]}(t)\;(0 \le t \le 1)$ with $\|x_n\|_{L^2(0,1)}=1$ for all $n\in\N$. Then we have, for all $0<s \le 1$ and sufficiently large $n \in N$, that
\begin{equation*}
\int_0^s x_n(t)\,dt = \int_0^{1/n} \sqrt{n}\,dt= \frac{1}{\sqrt{n}}\,,
\end{equation*}
which tends to zero as $n \to \infty$. This shows (cf., e.g.,~\cite[Satz~10, p.~151]{Ljusternik68}) the claimed weak convergence. On the other hand, we have
\begin{equation*}
[C x_n](s)=\left\{\begin{array}{ll}
\sqrt{n}& \quad (0<s\leq \frac{1}{n})\\
\frac{1}{\sqrt{n}s}&\quad (0<\frac{1}{n}\leq s\leq 1)
\end{array} \right.\,.
\end{equation*}
Hence
$$\|C x_n\|_{L^2(0,1)}^2= \int_0^{1/n} n ds + \int_{1/n}^1 \frac{1}{n s^2} ds \to 2 \quad \text{as} \;\; n\to \infty\,,$$
and $C$ is not compact.

In order to prove the unboundedness of $C^{-1}$, we can exploit the sequence \linebreak $x_n(t)=\sqrt{n}\,\cos(nt)\;(0 \le t \le 1)$ together with the associated sequence
$y_n(s):=[Cx_n](s)=\frac{\sin(ns)}{\sqrt{n}s}\;(0 \le s \le 1)$ possessing the limit
$$\|y_n\|^2_{L^2(0,1)}= \|Cx_n\|^2_{L^2(0,1)}= \int_0^1\frac{\sin^2(ns)}{ns^2} \to \sqrt{\frac{\pi}{2}}< \infty \quad \text{as} \;\; n\to \infty\,. $$
Now the property
$$\|x_n\|^2_{L^2(0,1)}=\|C^{-1}y_n\|^2_{L^2(0,1)}=\int_0^1 n \cos^2(nt) dt=\frac{n+\sin(n)\cos(n)}{2} \to \infty  $$
for $n \to \infty$ indicates that $C^{-1}$ cannot be bounded, which completes the proof of the lemma. \hfill \fbox{}



\begin{thebibliography}{99}


\bibitem{Brown65} {A.~Brown, P.~R.~Halmos and A.~L.~Shields}, {\it Ces\`aro operators}, Acta Sci.~Math.~(Szeged), 26:125--137, 1965.

\bibitem{Chang52} {S.-H.~Chang}, {\it  A generalization of a theorem
    of Hille and Tamarkin with applications},  Proc. London Math. Soc. (3)2:22--29, 1952.



\bibitem{Freitag05} {M.~Freitag and B.~Hofmann}, {\it Analytical and numerical studies on the influence of multiplication operators for the ill-posedness of inverse problems}, J.~Inverse Ill-Posed Probl., 13(2):123--148, 2005.

\bibitem{Gerth21} {D.~Gerth, B.~Hofmann, C.~Hofmann and S.~Kindermann}, {\it The Hausdorff moment problem in the light of ill-posedness of type~I}, Eurasian Journal of Mathematical and Computer pplications, 9(2):57--87, 2021.

\bibitem{HM22}  {B.~Hofmann and P.~Math\'{e}}, {\it The degree of ill-posedness of composite linear ill-posed problems with focus on the impact of the non-compact Hausdorff moment operator}, Electronic Transactions on Numerical Analysis, 57:1-16, 2022.

\bibitem{HofTau97} {B.~Hofmann and U.~Tautenhahn}, {\it On ill-posedness measures and space change in Sobolev scales},  Journal for Analysis and its Applications (ZAA), 16(4):979--1000, 1997.

\bibitem{HW05}  {B.~Hofmann and L.~von Wolfersdorf},  {\it Some results and a conjecture on the degree of ill-posedness for integration operators with weights}, Inverse Problems, 21(2):427--433, 2005.

\bibitem{HW09}  {B.~Hofmann and L.~von Wolfersdorf}, {\it A new result on the singular value asymptotics of integration oprators with weights}, J.~Integral Equations Appl., 21(2):281--295, 2009.


\bibitem{Lacruz15} {M.~Lacruz, F.~Le\'{o}n-Saavedra, S.~Petrovic and O.~Zabeti}, {\it Extended eigenvalues for {C}es\`aro operators}, J.~Math.~Anal.~Appl., 429(2):623--657, 2015.

\bibitem{Leib73}  {G.~M.~Leibowitz}, {\it Spectra of finite range {C}es\`aro operators}, Acta Sci.~Math.~(Szeged), 35:27--29, 1973.

\bibitem{Ljusternik68} {L.~A.~Ljusternik and W.~I.~Sobolew}, {\it Elemente der {F}unktionalanalysis},  Mathematical Textbooks and  Monographs, Part I: Mathematical Textbooks, Vol.~8, Akademie-Verlag, Berlin, 1968.

\bibitem{Nashed87} {M.~Z.~Nashed}, {\it A new approach to classification and regularization of ill-posed operator equations},
In: Inverse and Ill-posed Problems Sankt Wolfgang, 1986, Vol.~4 of Notes Rep.~Math.~Sci.~Engrg. (Eds.: H.~W.~Engl and C.~W.~Groetsch),
Academic Press, Boston, 1987, pp.~53--75.

\bibitem{Nik91} {E.~M.~Nikishin and V.~N.~Sorokin}, {\it Rational Approximations and Orthogonality}, Translations of Mathematical Monographs, Vol.~92, American Mathematical Society, Providence, 1991.


\bibitem{Pie78} {A.~Pietsch}, {\it Operator Ideals}, Mathematical Monographs, Vol.~16, Dt.~Verlag der Wissenschaften, Berlin, 1978.

\bibitem{Ramlau20} {R.~Ramlau, Ch.~Koutschan and B.~Hofmann}, {\it On the singular value decomposition of {$n$}-fold integration operators},
In: Inverse Problems and Related Topics, Vol.~310 of Springer Proc.~Math.~Stat. (Eds.: J.~Cheng, S.~Lu and M.~Yamamoto),
Springer, Singapore, 2020, pp.~237--256.

\bibitem{Reade83} {J.~B.~Reade}, {\it Eigenvalues of Lipschitz kernels}, Math.~Proc.~Cambridge Philos.~Soc., 93(1):135--140, 1983.

\bibitem{Read83} {J.~B.~Reade}, {\it Eigenvalues of positive definite kernels}, SIAM J.~Math.~Anal., 14(1):152--157, 1983.


\bibitem{VuGo94}  {Vu Kim Tuan and R.~Gorenflo}, {\it Asymptotics of singular values of fractional integral operators}, Inverse Problems, 10(4):949--955, 1994.


\end{thebibliography}
\end{document}